\newtheorem{thm}{Theorem}[section]
\newtheorem{lem}[thm]{Lemma}
\theoremstyle{definition}
\let\Lm=\Lambda
\let\vph=\varphi
\let\abs=\envert
\theoremstyle{remark}
\newtheorem{rem}[thm]{Remark}
\title{Quasiperfect numbers with the same exponent\footnote{2010 Mathematics 
Subject Classification:
11A25, 11A36, 11Y05, 11Y70.}
\footnote{Key words and phrases: Quasiperfect numbers, odd perfect numbers, sum of divisors, arithmetic functions.}}
\date{}
\author{Tomohiro Yamada}
\begin{document}
\maketitle

\begin{abstract}
We study some divisibility properties of quasiperfect numbers.  We show that
if $N=(p_1 p_2 \cdots p_t)^{2a}=m^2$ is quasiperfect, then
$2a+1$ is divisible by $3$ and $N$ has at least one prime factor
smaller than $\exp 716.7944$.  Moreover,
we find some lower bounds concerning quasiperfect numbers of the form $N=m^2$
with $m$ squarefree.
\end{abstract}

\section{Introduction}\label{intro}
A positive integer $N$ is called to be perfect if $\sigma(N)=2N$,
where $\sigma(N)$ denotes the sum of divisors of $N$.
As is well known, an even integer $N$ is perfect if and only if
$N=2^{k-1}(2^k-1)$ with $2^k-1$ prime.
In contrast, it is one of the oldest unsolved problems whether there exists
an odd perfect number or not.  Moreover, it is also unknown that
there exists an odd multiperfect number, an integer $N$ with $\sigma(N)=kN$ for some integer $k\geq 2$.

Cattaneo\cite{Cat} called a positive integer $N$ quasiperfect if $\sigma(N)=2N+1$
and showed that such an integer must be an odd square, any proper divisor $m$ of $N$
satisfies $\sigma(m)<2m$ and any divisor of $\sigma(N)$ is congruent to $1$ or $3$ modulo $8$.
Hagis and Cohen\cite{HC} showed that if $N$ is quasiperfect, then
$N>10^{35}$ and $N$ has at least $7$ distinct prime factors.

It is well-known that an odd perfect number must be of the form $q^b p_1^{2a_1}p_2^{2a_2}\cdots p_t^{2a_t}$
for some integers $a_1, a_2, \ldots, a_t, b$ and distinct primes $p_1, p_2, \ldots, p_t, q$
with $q\equiv b\equiv 1\pmod{4}$.
In the special case $a_1=a_2=\cdots =a_t=a$, several results are known as follows:
\begin{itemize}
\item[1)] If $a_1=a_2=\cdots =a_t=a$, we know that $a\geq 9$ and $a\neq 10, 11, 12, 13, 14, 16, 17, 18, 19, 24, 62$,
combining results in \cite{CW}\cite{HMD}\cite{Kan}\cite{Mc}\cite{MDH}\cite{St}.
\item[2)] If $d$ divides $2a_i+1$ for all $i$, then $d$ cannot be one of $3, 35, 65$,
combining results in \cite{EP}\cite{Mc}\cite{MDH}.
\item[3)] The author showed that, for any given $a$, there exist only finitely many odd perfect numbers
of the form $q^b (p_1 p_2\cdots p_t)^{2a}$ \cite{Ymd1}.
\item[4)] Fletcher, Nielsen and Ochem\cite{FNO} shows that
an odd perfect number $q^b p_1^{2a_1}p_2^{2a_2}\cdots p_t^{2a_t}$
for which there exists a finite set $S$ of primes such that each $2a_i+1$ divisible by a prime in $S$
must have a prime factor below an effectively computable constant $C$ depending on $S$.
In \cite{Ymd3}, the author gave an explicit upper bound for $C$.
\end{itemize}

Some results similar to 1), 2) above are known for quasiperfect numbers of the form
$(p_1 p_2 \cdots p_t)^{2a}$ for a given integer $a$.
Cohen\cite{Coh} showed that $a$ must be congruent to $1, 3, 5, 9$ or $11 \pmod{12}$.
Moreover, if an integer of the form $p_1^{6a_1+2}p_2^{6a_2+2}\cdots p_t^{6a_t+2}$
is quasiperfect, then $t\geq 230876$.  We would like to begin by extending this result,
which follows from some elementary consideration.
\begin{thm}\label{thm1}
If $N=p_1^{2a_1}p_2^{2a_2}\cdots p_t^{2a_t}=m^2$ is quasiperfect,
then there exists a prime factor $p_j\equiv 1\pmod{4}$ for which $2a_j+1$ does not have a divisor congruent to $5\pmod{8}$.
Moreover, if $N=(p_1 p_2 \cdots p_t)^{2a}=m^2$ is quasiperfect, then
\begin{itemize}
\item[a)] All prime factors of $N$ must be congruent to $1$ or $7\pmod{8}$.
\item[b)] $2a+1\equiv 3\pmod{8}$ and all prime factors of $2a+1$ must be congruent to $1$ or $3\pmod{8}$.
\item[c)] $2a+1$ must be divisible by $3$.
\end{itemize}
\end{thm}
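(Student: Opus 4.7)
The plan is to reduce everything modulo $8$ and modulo $3$, exploiting Cattaneo's result that every divisor of $\sigma(N)=2N+1$ is $\equiv 1$ or $3\pmod 8$. Since $N$ is an odd square we have $N\equiv 1\pmod 8$, hence $\sigma(N)\equiv 3\pmod 8$; and because $\sigma$ is multiplicative with each $\sigma(p_i^{2a_i})\equiv 1$ or $3\pmod 8$, an odd number of these factors are $\equiv 3\pmod 8$.

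A direct summation of the geometric series gives $\sigma(p^{2a})\equiv 2a+1,\ 4a+1,\ 6a+1,\ 1\pmod 8$ according to whether $p\equiv 1,3,5,7\pmod 8$. From this one reads off that a factor $\equiv 3\pmod 8$ can occur only when $p\equiv 1\pmod 4$, so at least one such $p_j$ must exist. For any divisor $d>1$ of $2a_j+1$, the divisibility $\sigma(p_j^{d-1})\mid\sigma(p_j^{2a_j})\mid\sigma(N)$ forces $\sigma(p_j^{d-1})\equiv 1$ or $3\pmod 8$; the same summation gives $\sigma(p_j^{d-1})\equiv d\pmod 8$ when $p_j\equiv 1\pmod 8$ and $\equiv 3d-2\pmod 8$ when $p_j\equiv 5\pmod 8$, and in either case $d\equiv 5\pmod 8$ is ruled out. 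This establishes the initial assertion.

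For the equal-exponent parts, the same table is reread with a common $a$, as a function of $a\bmod 4$. The cases $a\equiv 0,2\pmod 4$ make every admissible $\sigma(p^{2a})\equiv 1\pmod 8$, contradicting $\sigma(N)\equiv 3\pmod 8$. For $a\equiv 3\pmod 4$ only $p\equiv 5,7\pmod 8$ survive, and the odd-factor count forces some $p_j\equiv 5\pmod 8$; then $3\nmid N$, so $N\equiv 1\pmod 3$ and $\sigma(N)\equiv 0\pmod 3$. The mod $3$ evaluation $\sigma(p^{2a})\equiv 2a+1$ for $p\equiv 1\pmod 3$ and $\equiv 1$ for $p\equiv-1\pmod 3$ forces $3\mid 2a+1$, but the divisor argument applied to $p_j\equiv 5\pmod 8$ with $d=3$ yields $\sigma(p_j^2)\equiv 7\pmod 8$, a contradiction. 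Hence $a\equiv 1\pmod 4$, so $2a+1\equiv 3\pmod 8$. Reading the table with $a\equiv 1\pmod 4$ admits exactly $p\equiv 1,7\pmod 8$, proving (a); the odd-factor count then forces some $p_j\equiv 1\pmod 8$, and for each prime $q\mid 2a+1$ the divisor argument gives $\sigma(p_j^{q-1})\equiv q\pmod 8$, which must lie in $\{1,3\}$, proving (b). For (c), no $p_i$ equals $3$, so $N\equiv 1\pmod 3$ and $\sigma(N)\equiv 0\pmod 3$, and the mod $3$ evaluation of $\sigma(p^{2a})$ above forces $3\mid 2a+1$.

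The main obstacle I anticipate is the elimination of $a\equiv 3\pmod 4$ in the equal-exponent case: all individual mod $8$ constraints are consistent there, and only the interplay of the mod $3$ and mod $8$ arguments, through the forced divisor $3$ of $2a+1$, closes it off. After this, (a), (b), (c) unwind mechanically from the residue tables.
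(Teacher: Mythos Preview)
Your proof is correct, and the underlying ingredients are the same as the paper's: Cattaneo's constraint that every divisor of $\sigma(N)$ is $\equiv 1,3\pmod 8$, the geometric-series residues of $\sigma(p^{2a})$ modulo $8$, the divisor trick $\sigma(p^{d-1})\mid\sigma(p^{2a})$, and the mod $3$ argument. The differences are organizational but worth noting.

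The paper imports Cohen's result that $a\equiv 1,3,5,9,11\pmod{12}$ to conclude $a$ is odd, then proves (c) first (via $3\nmid N\Rightarrow 3\mid\sigma(N)$), and only then uses $3\mid 2a+1$ to get $p_i^2+p_i+1\mid\sigma(N)$, from which (a) follows; (b) is finished off by combining the divisor argument at a prime $p_s\equiv 1\pmod 8$ with Cohen again. Your version is self-contained: you rebuild the exclusion of $a\equiv 0,2\pmod 4$ directly from the residue table, and you eliminate $a\equiv 3\pmod 4$ by the interplay of the mod $3$ and mod $8$ constraints (forcing $3\mid 2a+1$ and then reading $\sigma(p_j^2)\equiv 7\pmod 8$ for the obligatory $p_j\equiv 5\pmod 8$). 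This is exactly the $d=3$ instance of the paper's $p^2+p+1$ step, but you apply it \emph{before} establishing (a) rather than after, which lets you pin down $a\equiv 1\pmod 4$ first and then read (a) straight off the table. The payoff of your ordering is that no appeal to Cohen is needed and the role of the ``hard'' case $a\equiv 3\pmod 4$ is made transparent; the paper's ordering has the advantage that (c) and (a) fall out quickly once Cohen is invoked.
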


\begin{proof}
Since $\sigma(N)=2N+1=2m^2+1\equiv 3\pmod{8}$, we have $\sigma(p_j^{2a_j})\not\equiv 1\pmod{8}$
for some $j$, for which, by Cattaneo's result mentioned above,
$\sigma(p_j^{2a_j})\equiv 3\pmod{8}$ cannot have a prime factor
congruent to $5$ or $7$ modulo $8$.
If $d$ divides $2a_j+1$, then $\sigma(p_j^{d-1})$ divides $\sigma(p_j^{2a_j})\mid\sigma(N)=2m^2+1$
and therefore has only prime factors congruent to $1$ or $3$ modulo $8$.
Since $\sigma(p_j^{2a_j})\equiv 3\pmod{8}$, we must have $p_j\equiv 1\pmod{4}$.
If $p_j\equiv 1\pmod{8}$, then $d\equiv 1$ or $3\pmod{8}$ and,
if $p_j\equiv 5\pmod{8}$, then $d\equiv 1$ or $7\pmod{8}$.
Hence $d$ cannot be congruent to $5$ modulo $8$.  This proves the former part of the theorem.

Next, assume that $N=(p_1 p_2 \cdots p_t)^{2a}=m^2$ is quasiperfect.
By Cohen's result mentioned above, we must have $2a+1\equiv 3\pmod{4}$.
Hence $N$ has no prime factor congruent to $3\pmod{8}$.
In particular, $N=m^2$ is not divisible by $3$ and therefore $\sigma(N)=2m^2+1$ must be divisible by $3$.
Hence there exists a prime factor $p_k$ for which $\sigma(p_k^{2a})$ is divisible by $3$.
This implies $p_k\equiv a\equiv 1\pmod{3}$.  So that $2a+1$ must be divisible by $3$
as stated in c).
Since $p_i^2+p_i+1$ divides $\sigma(N)$, $p_i^2+p_i+1$ must be congruent
to $1$ or $3$ modulo $8$ and therefore $p_i\equiv \pm 1\pmod{8}$ for any $i$,
proving a).  By the former part of the theorem, $p_s\equiv 1\pmod{8}$ for some $s$.
If $d$ divides $2a_s+1$, then $\sigma(p_s^{d-1})\mid\sigma(p_s^{2a})\mid\sigma(N)=2m^2+1$
and therefore $d\equiv 1$ or $3\pmod{8}$.
Finally, Cohen's result mentioned above yields that $a$ cannot be a multiple of $4$
and therefore $2a+1\equiv 3\pmod{8}$, which proves b).
\end{proof}

No similar result to 3) has been known for quasiperfect numbers and
neither has the author been able to prove such a result.  Instead,
in \cite{Ymd3}, the author proved that,
if $N=p_1^{2a_1}p_2^{2a_2}\cdots p_t^{2a_t}$ is quasiperfect
and there exists a finite set $S$ of primes such that each $2a_i+1$ is divisible by a prime in $S$,
then $N$ must have a prime factor below $C_0=\exp 2173.5\abs{S}^2\max\{8l, \exp 13.3\}$.
More generally, the author proved that, if $N=p_1^{2a_1}p_2^{2a_2}\cdots p_t^{2a_t}$
satisfies that $\sigma(N)\geq 2N$ has no prime factor congruent to $5$ or $7$ modulo $8$
and there exists a finite set $S$ of primes such that each $2a_i+1$ is divisible by a prime in $S$,
then $N$ must have a prime factor below $C_0$.  In this paper, we prove the following result.
\begin{thm}\label{thm2}
If $N=(p_1 p_2 \cdots p_t)^{2a}=m^2$ is quasiperfect, then
$N$ must have a prime factor below $C=\exp 716.7944<1.995 \cdot 10^{311}$.
\end{thm}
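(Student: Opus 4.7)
The plan is to specialize the general argument of \cite{Ymd3} to the setting of Theorem~\ref{thm2}, exploiting the extra structural information provided by Theorem~\ref{thm1}. Since Theorem~\ref{thm1}~c) gives $3 \mid (2a+1)$, the general theorem of \cite{Ymd3} applies with $S=\{3\}$. In particular, for every prime factor $p_i$ of $N$ the cyclotomic value $\Phi_3(p_i) = p_i^2+p_i+1$ divides $\sigma(p_i^{2a})$, and hence divides $\sigma(N) = 2N+1$. Combining this with Cattaneo's result that every prime divisor of $\sigma(N)$ is $\equiv 1$ or $3 \pmod 8$, together with Theorem~\ref{thm1}~a), the admissible residue classes modulo $24$ for both the $p_i$ and the prime divisors of $\sigma(N)$ are very restricted; indeed, any primitive prime divisor $q$ of $\Phi_3(p_i)$ must satisfy $q\equiv 1 \pmod 3$ and $q\equiv 1,3\pmod 8$, so $q\equiv 1$ or $19\pmod{24}$.

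Concretely I would proceed in four steps. \emph{First}, assume for contradiction that every prime factor of $N$ exceeds $C=\exp 716.7944$. \emph{Second}, invoke the machinery of \cite{Ymd3}: for each $i$ choose a primitive prime divisor $q_i$ of $\Phi_3(p_i)$ (which exists by Zsygmondy since $p_i\geq 7$), noting that a given prime $q$ can arise as such a primitive divisor for at most two distinct values of $p$, and that each $q_i$ lies in the narrow residue class set above. \emph{Third}, compare a lower bound for $\sigma(N)$ obtained from $\prod_i q_i$ and from the lower bound on the $p_i$'s with the trivial upper bound $\sigma(N)=2N+1<2\prod p_i^{2a}+1$, to extract an inequality relating $t$ and the smallest $p_i$. \emph{Fourth}, feed this inequality into a numerical optimisation calibrated to the value $716.7944$, using the multiplicative structure $\sigma(N)/N=2+1/N$ to close the argument.

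The main obstacle is the numerical calibration, since the general bound $C_0$ in \cite{Ymd3} is crippled by the factor $\max\{8l,\exp 13.3\}$ and is far larger than $\exp 716.7944$. Essentially all of the gain must come from (a) taking $|S|=1$ with the sole prime in $S$ equal to $3$, the smallest possible, and (b) replacing generic density estimates by the sharp density $2/\varphi(24)=1/4$ of admissible residue classes coming from Theorem~\ref{thm1} and Cattaneo's congruence. Carrying this out cleanly, with explicit Rosser--Schoenfeld type bounds for Chebyshev-like sums over primes in the two admissible arithmetic progressions modulo $24$, is the technical core of the proof, and the only step where the precise numerical constant $716.7944$ is produced.
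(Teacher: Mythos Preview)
Your setup is right: every prime factor $p_i$ of $N$ lies in $\{\pm 1\bmod 8\}$, and $p_i^2+p_i+1\mid\sigma(N)$ forces every prime divisor of $p_i^2+p_i+1$ to avoid the classes $7,13\pmod{24}$. From there, however, your steps 2--4 do not constitute a working argument. Picking a single primitive prime divisor $q_i$ of $\Phi_3(p_i)$ and ``comparing a lower bound for $\sigma(N)$ from $\prod q_i$ with $2N+1$'' cannot produce a finite bound on the smallest $p_i$: we have $\sigma(N)=2N+1$ exactly, and the $q_i$'s dividing $\sigma(N)$ give no useful size inequality. More fundamentally, your claimed ``sharp density $2/\varphi(24)=1/4$'' refers to the residue classes available to the $q_i$, not to the $p_i$; the $p_i$ themselves range over four classes mod $24$, and restricting to any fixed union of arithmetic progressions still leaves $\sum_{p\geq C,\,p\equiv\pm1(8)}1/p$ divergent, so $\prod p/(p-1)$ cannot be made $<2$ by congruence conditions on $p$ alone.

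What the paper actually does---and what your plan is missing---is a sieve. One defines $Q^\pm$ as the set of primes $p\equiv\pm1\pmod 8$ for which $p^2+p+1$ has \emph{no} prime factor $\equiv 7,13\pmod{24}$, and shows via an explicit upper-bound sieve (Iwaniec--Kowalski Theorem~7.14 together with Lemma~2.2 of \cite{Ymd3}) that $\pi^\pm(X)\leq cX/\log^{3/2}X$ with an explicit $c$. The extra $\log^{-1/2}X$ comes precisely from sieving out, for each prime $q\equiv 7,13\pmod{24}$, the two residue classes modulo $q$ where $p^2+p+1\equiv 0$. The explicit estimates for primes in progressions mod $24$ (your ``Rosser--Schoenfeld type bounds'') enter only to control the sieve functions $V(z)$ and $B(z)$, not as the main mechanism. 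With the $X/\log^{3/2}X$ bound in hand, partial summation gives $\sum_{p\geq C,\,p\in Q^+\cup Q^-}1/p<\log 2$ for $C=\exp 716.7944$, whence $\prod p/(p-1)<2$ and $N$ cannot be quasiperfect. Your primitive-prime-divisor map is at best a heuristic for why the sieve dimension increases; it does not replace the sieve itself.
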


We shall give the outline of our proof here.
From Theorem \ref{thm1}, we can see that, each $p_i^2+p_i+1$ divides $\sigma(N)=2m^2+1$
and therefore cannot have a prime factor $\equiv 7, 13 \pmod{24}$,
which is implicit in the Note of Lemma 3 of \cite{Coh}.
Using some sieve argument, we shall prove that the number of prime $p\leq X$ such that
$p^2+p+1$ has no prime factor $\equiv 7, 13 \pmod{24}$ is $<cX/\log^{3/2} X$ with an explicit constant $c$.

Our sieve argument is based on the author's one used in \cite{Ymd2}
to prove that an odd perfect number $q^b p_1^{2a_1}p_2^{2a_2}\cdots p_t^{2a_t}$
with all $p_i$ in a given finite set $S$ must have a prime factor
below an effectively computable constant $C$ depending on $S$.
This method was refined by Fletcher, Nielsen and Ochem\cite{FNO} and the author\cite{Ymd3}
to prove 4).

Finally, we would like to show the following lower bounds concerning
quasiperfect number of the form $N=m^2$ with $m$ squarefree.
\begin{thm}\label{thm3}
If $N=m^2$ with $m$ squarefree is quasiperfect, then
$N>\exp 17840573219$ and $N$ must have at least $406550054$ distinct prime factors,
one of which is $\geq 9457308739$.
\end{thm}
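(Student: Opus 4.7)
The plan is to combine the abundancy identity with the residue-class restrictions on the prime factors of $N$ supplied by Theorem \ref{thm1}. Under the hypothesis, write $N=(p_1p_2\cdots p_t)^2$ for distinct odd primes $p_1<p_2<\cdots<p_t$; specializing Theorem \ref{thm1} to $a=1$ forces each $p_i\equiv\pm 1\pmod 8$. Since $\sigma(p_i^2)=p_i^2+p_i+1$ divides $\sigma(N)=2m^2+1$, Cattaneo's result that every divisor of $\sigma(N)$ is $\equiv 1$ or $3\pmod 8$ additionally forces every prime factor of $p_i^2+p_i+1$ to lie in the classes $1$ or $3\pmod 8$. Let $\mathcal{P}=\{q_1<q_2<\cdots\}$ be the resulting list of admissible primes. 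The quasiperfect condition then reads
\[\prod_{i=1}^{t}\left(1+\frac{1}{p_i}+\frac{1}{p_i^{2}}\right)=2+\frac{1}{N}>2.\]

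For each fixed $t$ the left-hand side of this identity is maximized over admissible tuples by taking the $t$ smallest elements of $\mathcal{P}$. Let $T$ be the least positive integer with
\[\prod_{k=1}^{T}\left(1+\frac{1}{q_k}+\frac{1}{q_k^{2}}\right)>2;\]
then $t\geq T$, and the assertion $T=406550054$ reduces to a direct computation. The lower bound on $N$ follows from $N\geq\prod_{k=1}^{T}q_k^{2}$, giving $\log N\geq 2\sum_{k=1}^{T}\log q_k$, which the same computation will show exceeds $17840573219$. For the largest prime factor $P=\max_i p_i$, if $P<q_T$ held, then the restriction of the abundancy product to admissible primes below $q_T$ would already exceed $2$, contradicting the minimality of $T$; hence $P\geq q_T\geq 9457308739$.

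The three numerical assertions therefore reduce to a single extensive computation: enumerate admissible primes up to about $10^{10}$, maintaining the running sums of $\log(1+1/q+1/q^{2})$ and $\log q$ with enough precision to locate the crossing of $\log 2$. The principal obstacle is exactly this computation, which requires a prime sieve well into the billions together with, for each candidate $q\equiv\pm 1\pmod 8$ in range, a partial factorization of $q^2+q+1$ to verify the admissibility constraint on its prime factors. Rigorous control of round-off in the accumulated sums (for instance via interval arithmetic) will also be needed in order to justify the precise constants $406550054$, $17840573219$, and $9457308739$ stated in the theorem.
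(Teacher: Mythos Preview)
Your reduction is correct: the quasiperfect identity, together with the residue restrictions from Theorem~\ref{thm1} and Cattaneo's lemma, forces every prime factor of $N$ into the set $Q^{+}\cup Q^{-}$ of the paper (your $\mathcal{P}$; the two descriptions are equivalent since any odd prime dividing $p^{2}+p+1$ is $\equiv 1\pmod 3$). The three assertions then follow from locating the least $T$ with $\prod_{k\le T}(1+1/q_k+1/q_k^{2})>2$.

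The paper, however, does \emph{not} carry out the direct enumeration you propose. It computes admissible primes only up to $2^{29}\approx 5.4\times 10^{8}$, recording $\prod_{p<2^{29},\,p\in Q^{+}\cup Q^{-}}(p^{2}+p+1)/p^{2}<1.7502$, the count $3285696$, and $\sum\log p>62460825.5$. For primes beyond $2^{29}$ it drops the admissibility test altogether and bounds the residual abundancy by the full Mertens product $\prod_{2^{29}<p\le P}p/(p-1)$ via Dusart's explicit constants; the same device, with Dusart's bounds on $\theta$ and $\pi$, then gives the stated lower bounds on $P$, $\log N$, and $t$. This is computationally far lighter than your plan, which would require partially factoring $q^{2}+q+1$ (numbers of size up to $10^{20}$) for the several hundred million primes $q$ between $2^{29}$ and $10^{10}$. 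The trade-off is that the paper's constants $406550054$, $9457308739$, $17840573219$ are not the exact values of $T$, $q_T$, $2\sum_{k\le T}\log q_k$ but conservative lower bounds coming from the analytic tail; your direct computation, if completed, would return strictly larger numbers. Hence your phrase ``the assertion $T=406550054$ reduces to a direct computation'' is slightly off: the computation would certify $T\ge 406550054$, not equality.
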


Our results supports the conjecture that there exists no quasiperfect number
of the form $N=(p_1 p_2 \cdots p_t)^{2a}$.

\section[Proof of Theorem 1.2]{Proof of Theorem \ref{thm2}}

From a) of Theorem \ref{thm1} and a remark in the introduction we see that,
if $p$ is a prime factor of $N$, then $p\equiv \pm 1\pmod{8}$ and
$p^2+p+1$ cannot have a prime factor congruent to $7$ or $13 \pmod{24}$.
Letting $Q^\pm$ be the set of prime numbers $p\equiv \pm 1\pmod{8}$ such that
$p^2+p+1$ has no prime factor congruent to $7$ or $13 \pmod{24}$,
we see that any prime factor of $N$ must be contained in either $Q^+$ or $Q^-$.
Hence it suffices to show that $\prod_{p\geq C, p\in Q^+\cup Q^-}\frac{p}{p-1}<2$.

We would like to introduce some notations in order to apply sieve methods.
First, $\pm$ may take a different sign in each different occurence
but shall take the same sign in the same context.
Let $\Omega_p$ be a set of congruent classes modulo each prime $p$ and
define $\rho(p)$ to be the number of such congruent classes and
$S(\Omega)=S(x, \Omega, y)$ to be the set of integers $\leq x$ which does not belong to $\Omega_p$ for any prime $p\leq y$.

Putting $\Omega^\pm _p=\{n\mid (8n\pm 1)((8n\pm 1)^2+(8n\pm 1)+1)\equiv 0\pmod{p}\}$
for primes $p$ congruent to $7$ or $13 \pmod{24}$
and $\Omega^\pm_p=\{n\mid 8n\pm 1\equiv 0\pmod{p}\}$ for other odd primes $p$,
we have $\rho^\pm (2)=0$, $\rho^\pm (p)=3$ for primes $p\equiv 7, 13\pmod{24}$
and $\rho^\pm(p)=1$ for the other primes $p$.
Moreover, it is clear that, if $p=8n\pm 1$ belongs to $Q^\pm$ and $p\geq y, n\leq x$,
then $n$ must be contained in $S(x, \Omega^\pm, y)$ and therefore
\begin{equation}\label{eq21}
\pi^\pm (8x\pm 1)\leq y+\abs{S(x, \Omega^\pm, y)}
\end{equation}
where $\pi^\pm(X)$ denotes the number of primes $\leq X$ belonging to $Q^\pm$.

In order to estimate $\abs{S(x, \Omega^\pm, y)}$, we use the sieve method
mentioned in introduction.  Let us introduce further notations
\begin{equation}
B(z)=B^\pm (z)=\frac{1}{\log z}\sum_{p\leq z} \frac{\rho^\pm (p)\log p}{p},
\end{equation}
\begin{equation}
V(z)=V^\pm (z)=\prod_{p\leq z}\left(1-\frac{\rho^\pm (p)}{p}\right)
\end{equation}
and
\begin{equation}
\psi_0^\pm (v, u)=1-\exp(-\psi_1(B^\pm (x^\frac{1}{v}), v/u)),
\end{equation}
where
\begin{equation}
\psi_1(K, t)=\max\left\{0, t\log \frac{t}{K}-t+K\right\}.
\end{equation}

Now, we have the following sieve inequality.
\begin{lem}\label{lm21}
For any real $v\geq u\geq 2$, we have
\begin{equation}\label{eq22}
\abs{S(x, \Omega^{\pm}, x^\frac{1}{u})}\leq \frac{(x+x^\frac{2}{u})V^\pm(x^{1/u})}{\psi_0^\pm (v, u)}.
\end{equation}
\end{lem}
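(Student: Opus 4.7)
The plan is to prove the inequality via a Rankin-type combinatorial sieve, following the author's approach in \cite{Ymd2} and \cite{Ymd3}. Write $y = x^{1/u}$ and $z = x^{1/v}$, so that $z \leq y$, and for each $n \in S(x,\Omega^\pm,z)$ let $\omega(n)$ denote the number of primes $p \in (z, y]$ with $n \in \Omega^\pm_p$; then $S(x,\Omega^\pm,y)$ is precisely the subset of $S(x,\Omega^\pm,z)$ on which $\omega(n) = 0$. The inequality to be proved thus compares two sifted sets at nested levels, and the loss factor $1/\psi_0^\pm(v,u)$ is meant to absorb the cost of squeezing from the weaker sieve at $z$ to the stronger sieve at $y$.

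The first step is to estimate the exponential moment
$$\Sigma(\alpha) = \sum_{n \in S(x,\Omega^\pm,z)} \exp(\alpha\,\omega(n)),\qquad \alpha \in \mathbf{R},$$
by a standard upper-bound (Selberg $\Lambda^2$) sieve at sifting level $z^2 \leq x^{2/u}$. Expanding
$$\exp(\alpha\,\omega(n)) = \prod_{z<p\leq y}\bigl(1+(e^\alpha-1)\mathbf{1}_{n\in\Omega^\pm_p}\bigr)$$
and summing against the Selberg weights yields a main term of the shape $xV^\pm(y)R(\alpha)$ plus an error of order $x^{2/u}$, where by Mertens-type partial summation against the defining formula of $B^\pm(z)$ the factor $R(\alpha)$ collapses to $\exp((e^\alpha-1)B^\pm(z)\log(v/u))$ up to admissible lower-order error.

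The second step is a Rankin/large-deviation argument. Combining the trivial bound $\mathbf{1}_{\omega(n)=0} \leq e^{-\alpha\omega(n)}$ (for $\alpha\geq 0$) with a dual inequality using $\Sigma(\alpha)$ for large positive $\alpha$ produces, after a convex combination, an inequality of the form
$$\abs{S(x,\Omega^\pm,y)}\bigl(1-\exp(-\psi_1(B^\pm(z),v/u))\bigr)\leq (x+x^{2/u})V^\pm(y).$$
Optimizing $\alpha$ is exactly the Legendre transform that produces the rate function $\psi_1(K,t)=t\log(t/K)-t+K$ with $K=B^\pm(z)$ and $t=v/u$, so the prefactor on the left is precisely $\psi_0^\pm(v,u)$ as defined, giving the stated bound after division.

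The main obstacle will be the uniform Mertens-type estimate for the generalized dimension function $B^\pm$: since $\rho^\pm(p)=3$ for $p\equiv 7,13\pmod{24}$ and $\rho^\pm(p)=1$ for the other odd primes $p\neq 2$, one needs a sharp averaged form of Mertens's theorem along these arithmetic progressions, uniform in $z$ and in the ratio $v/u$, to preserve the clean Legendre form of $\psi_1$ without wasteful constants. Once that estimate is in place, Selberg's sieve with remainder $O(z^2)\leq O(x^{2/u})$ and the elementary monotonicity of $\psi_0$ combine routinely to finish the argument; the delicate numerical constants only enter when this lemma is later fed into the upper-bound calculation toward the explicit value $C=\exp 716.7944$.
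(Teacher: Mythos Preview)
The paper's proof is a one-line citation: after noting the trivial fact that $\rho^\pm(p)\le 3<p$ for every prime $p$, the author simply invokes Theorem~7.14 of Iwaniec--Kowalski \cite{IK} together with Lemma~2.2 of \cite{Ymd3}, from which the stated inequality is read off verbatim. No new sieve argument is carried out here; the lemma is a specialization of an inequality already established in those references.

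Your proposal instead attempts to reconstruct the underlying sieve from scratch, and here there is a genuine gap. With $\omega(n)=\#\{p\in(z,y]:n\in\Omega_p^\pm\}$, the Chernoff bound $\mathbf{1}_{\omega=0}\le e^{-\alpha\omega}$ followed by optimization in $\alpha\ge 0$ yields the rate
\[
\inf_{\alpha\ge 0}\log\mathbb{E}\,e^{-\alpha\omega}\ \approx\ -\sum_{z<p\le y}\frac{\rho^\pm(p)}{p}\ \approx\ -B^\pm(z)\log\!\frac{v}{u},
\]
not $-\psi_1(B^\pm(z),v/u)$. The function $\psi_1(K,t)=t\log(t/K)-t+K$ is the Legendre transform governing an \emph{upper}-tail event $\{\omega\ge t\}$; it cannot arise from your lower-tail optimization, and no ``dual inequality'' or ``convex combination'' repairs this mismatch. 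For large $v/u$ the two rates differ by a factor of order $(v/u)/\log(v/u)$, so the bound you would obtain is strictly weaker than the one stated.

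The mechanism that actually produces $\psi_1(B^\pm(z),v/u)$, and which is packaged in \cite[Lemma~2.2]{Ymd3}, applies Rankin's trick not to $n$ but to the divisor variable in the Legendre/Brun expansion: one truncates $\sum_{d\mid P(y)}\mu(d)\rho(d)/d$ at level $D=x^{2/u}$ (whence the $x^{2/u}$ in the statement), and bounds the tail via $\mathbf{1}_{d>D}\le (d/D)^\alpha$. Writing $\log d=\sum_{p\mid d}\log p$, the constraint $d>D$ becomes the upper-tail event $\sum_{p\mid d}\log p>\log D$, and optimizing the resulting product $\prod_{p\le y}(1+\rho^\pm(p)p^{\alpha-1})$ against $D^{-\alpha}$ with $K=B^\pm(z)$ and $t=\log D/\log z=v/u$ is exactly what yields $\psi_1(K,t)$. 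No Selberg $\Lambda^2$ sieve is involved.
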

\begin{proof}
By definition, we have $\rho^\pm(p)<p$ for any prime $p$ and therefore,
we can apply Theorem 7.14 in \cite{IK} and Lemma 2.2 of \cite{Ymd3} to obtain (\ref{eq22}).
\end{proof}

Now we need to estimate $B(z)$ and $V(z)$.  To this end, we need some explicit estimates
for the number of primes in arithmetic progressions.
Our start points are the following error estimates in the prime number theorem for arithmetic progressions with difference $24$,
which can be obtained from some known explicit versions of the prime number theorem for arithmetic progressions
and a recent numerical results for the Generalized Riemann Hypothesis of $L$-functions.
\begin{lem}\label{lm31}
As in \cite{Dus1}, we set $R=9.645908801$.
Let $l$ be any integer coprime to $6$.  Then,
\begin{equation}\label{eq31a}
\frac{1}{z}\abs{\psi(z; 24, l)-\frac{z}{8}}<
\begin{cases}c_1:=6.27961\cdot 10^{-4} & \text{ for } z\geq 10^{10}, \\
c_2:=1.94638\cdot 10^{-5} & \text{ for } z\geq e^{30}, \\
c_3:=2.4432\cdot 10^{-7} & \text{ for } z\geq e^{60}. \\
\end{cases}
\end{equation}

Moreover, for any real $z\geq e^{625R}$, we have
\begin{equation}\label{eq31b}
\abs{\psi(z; 24, l)-\frac{z}{8}}<\frac{3.6\cdot 10^{-7}z}{\log z}
\end{equation}
and
\begin{equation}\label{eq31c}
\abs{\psi(z; 24, l)-\frac{z}{8}}<\frac{0.0022z}{\log^2 z}.
\end{equation}
\end{lem}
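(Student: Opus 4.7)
The plan is to combine two standard tools: explicit bounds coming from the Korobov--Vinogradov zero-free region for Dirichlet $L$-functions (for the asymptotic estimates \eqref{eq31b} and \eqref{eq31c}), and numerical verification of the Generalized Riemann Hypothesis up to large heights for the Dirichlet characters modulo~$24$ (for the constants $c_1, c_2, c_3$ in \eqref{eq31a}).

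The starting reduction is the character decomposition
\[
\psi(z; 24, l) - \frac{z}{8} = \frac{1}{\phi(24)} \sum_{\chi \neq \chi_0 \bmod 24} \bar\chi(l)\, \psi(z,\chi) + O(\log z),
\]
where $\psi(z,\chi) = \sum_{n \leq z} \chi(n) \Lambda(n)$. Since there are only $\phi(24)=8$ characters to consider, each of small conductor dividing $24$, the problem becomes that of bounding $|\psi(z,\chi)|$ for nonprincipal $\chi$ uniformly in $\chi$.

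For the asymptotic bounds \eqref{eq31b} and \eqref{eq31c}, I would invoke Dusart's explicit version of the prime number theorem for arithmetic progressions from \cite{Dus1}, which yields estimates of the shape $\abs{\psi(z;q,l) - z/\phi(q)} \leq C z / \log^k z$ valid past a threshold of the form $\exp(cR \log^2 q)$, with $R=9.645908801$ the explicit Korobov--Vinogradov zero-free constant. Specialising to $q = 24$ gives the threshold $e^{625R}$, and reading off the constants appearing in Dusart's formulas produces the numerical factors $3.6\cdot 10^{-7}$ and $0.0022$.

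For the intermediate ranges in \eqref{eq31a}, the approach is to use the truncated explicit formula
\[
\psi(z,\chi) = -\sum_{\abs{\gamma} \leq T} \frac{z^{\rho}}{\rho} + R(z,T)
\]
together with the numerical verification (Platt and others) that all nontrivial zeros $\rho = \frac{1}{2}+i\gamma$ of the $L$-functions of conductor dividing~$24$ with $\abs{\gamma} \leq T_0$ satisfy $\Re\rho = 1/2$, for some large computed height $T_0$. For each of the thresholds $z \geq 10^{10}$, $z \geq e^{30}$, $z \geq e^{60}$, one picks $T \leq T_0$ optimally so that the zero sum contributes $O(\sqrt{z}\log^2 T)$ while $R(z,T)$ remains a small multiple of $z$; then the bounds $c_1, c_2, c_3$ fall out by direct calculation. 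The main obstacle is the numerical bookkeeping needed to extract the precise constants $c_1, c_2, c_3, 3.6\cdot 10^{-7}$ and $0.0022$ from the chosen references, tracking constants uniformly through the character decomposition, the explicit formula truncation, and the zero-free region, and checking that the resulting bound is independent of the residue class $l$ coprime to~$6$.
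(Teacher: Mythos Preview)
Your proposal is essentially correct and follows the same route as the paper. The paper, however, does not rederive anything from the explicit formula or the character decomposition: it simply quotes Platt's numerical GRH verification (zeros of $L(s,\chi)$ for $\chi\bmod 24$ lie on the critical line for $|\Im s|\le 10^8/24$) and plugs the parameters $H=10^8/24$, $C_1=38.31$ into Theorem~4.3.2 of Ramar\'e--Rumely to read off \eqref{eq31a}, and then combines the explicit zero-free region from Theorem~3.6.3 of Ramar\'e--Rumely with Theorem~5 of Dusart~\cite{Dus1} (using the same $H$ and $C_1$, and $X_4\le 25$) to obtain \eqref{eq31b} and \eqref{eq31c}. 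What you describe is precisely the machinery inside those cited theorems; the paper's contribution is only the bookkeeping of specialising to modulus~$24$.

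One terminological correction: the constant $R=9.645908801$ is not a Korobov--Vinogradov constant. It comes from a classical (de~la~Vall\'ee~Poussin type) zero-free region of the shape $1-\Re s\ge 1/(R\log(q\,|\Im s|/C_1))$; a Korobov--Vinogradov region would involve $(\log|\Im s|)^{2/3}(\log\log|\Im s|)^{1/3}$ and would not produce the clean threshold $e^{625R}$. Also, the threshold $e^{625R}$ is not of the generic form $\exp(cR\log^2 q)$ you suggest; it is a specific constant arising from Dusart's Theorem~5 with the chosen parameters.
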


\begin{proof}
Platt \cite{Pla} confirmed that $L(s, \chi)$ has no nontrivial zeros with $\Re s\neq 1/2$ and $\abs{\Im s}\leq 10^8/24$
for all characters $\chi$ modulo $24$.
Now (\ref{eq31a}) follows from Theorem 4.3.2 of \cite{RR} with $H=10^8/24, C_1=38.31$.

Moreover, from Theorem 3.6.3 of \cite{RR}, we see that
any nontrivial zero $s$ of $L(s, \chi)$ satisfies $1-\Re s\geq 1/R\log (24\abs{\Im s}/38.31)$
for all characters $\chi$ modulo $24$.
Now we can apply Theorem 5 of \cite{Dus1} with $H=10^8/24, C_1(k)=38.31$ and $X_4\leq 25$ for any characters modulo $24$
to obtain (\ref{eq31b}) and (\ref{eq31c}).
This proves the lemma.
\end{proof}

\begin{rem}
Kadiri \cite{Kad} claimed to prove that
any nontrivial zero $s$ of $L(s, \chi)$ satisfies $1-\Re s\geq 1/6.397\log (24\abs{\Im s})$
for all characters $\chi$ modulo $24$.  This would give better estimates.
\end{rem}

Using these error estimates, we obtain the following bounds.
\begin{lem}\label{lm32}
For any real $z>e^{60}$, we have
\begin{equation}\label{eq32a}
\sum_{p\leq z, p\equiv 7, 13\pmod{24}}\frac{\log p}{p}<\frac{\log z}{4}
\end{equation}
and
\begin{equation}\label{eq32b}
\prod_{p\leq z, p\equiv 7, 13\pmod{24}}\left(1-\frac{1}{p}\right)^{-1}>0.95442 \log^\frac{1}{4} z.
\end{equation}
\end{lem}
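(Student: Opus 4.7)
The plan is to deduce both inequalities from Lemma~\ref{lm31} by a standard partial-summation argument, splitting the range of integration at the thresholds $10^{10}$, $e^{30}$, $e^{60}$ supplied by that lemma.

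For~\eqref{eq32a}, set $\Theta(t):=\theta(t;24,7)+\theta(t;24,13)$. Partial summation gives
\[
\sum_{p\le z,\ p\equiv 7,13\pmod{24}}\frac{\log p}{p}=\frac{\Theta(z)}{z}+\int_{2}^{z}\frac{\Theta(t)}{t^{2}}\,dt.
\]
Writing $\Theta(t)=t/4+E(t)$, the main term integrates to $\frac{\log z}{4}+\frac{1-\log 2}{4}$, so the claim is equivalent to
\[
\frac{E(z)}{z}+\int_{2}^{z}\frac{E(t)}{t^{2}}\,dt<-\frac{1-\log 2}{4}\approx -0.0767.
\]
On $[10^{10},e^{30}]$, $[e^{30},e^{60}]$ and $[e^{60},z]$ I would bound $|E(t)|$ by $2c_{1}t$, $2c_{2}t$ and $2c_{3}t$ respectively via Lemma~\ref{lm31} (absorbing the tiny discrepancy $\psi(t)-\theta(t)=O(\sqrt{t}\log t)$). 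Each such piece contributes only a handful of the corresponding $c_{i}$ to the integral, far below $0.0767$; the negative mass that actually crosses the threshold must come from $\int_{2}^{10^{10}}E(t)/t^{2}\,dt$, which I would evaluate numerically by tabulating the partial Mertens sums over the classes $7,13\pmod{24}$ up to $10^{10}$.

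For~\eqref{eq32b} I would pass to logarithms and expand $-\log(1-1/p)=\sum_{k\ge1}1/(kp^{k})$. The $k\ge 2$ tail converges absolutely over the two residue classes and is bounded and computed directly. For the $k=1$ piece a further partial summation, using $1/p=(\log p)/p\cdot(1/\log p)$, converts $\sum_{p\le z,\ p\equiv 7,13\pmod{24}}1/p$ into $S(z)/\log z+\int S(t)/(t\log^{2}t)\,dt$, where $S(t)$ is the sum of~\eqref{eq32a}. Substituting the refined identity $S(t)=\log t/4+R(t)$ coming from the analysis of part~(a) yields
\[
\sum_{p\le z,\ p\equiv 7,13\pmod{24}}\frac{1}{p}=\frac{1}{4}\log\log z+C+O(1/\log z)
\]
with an explicit constant $C$; exponentiating gives $\prod(1-1/p)^{-1}\ge e^{C+o(1)}\log^{1/4}z$, and the stated $0.95442$ is $e^{C}$ rounded down.

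The main obstacle is numerical rather than analytic. The $(1-\log 2)/4\approx 0.077$ deficit in part~(a) and the four-digit constant $0.95442$ in part~(b) are each pinned down only by the partial Mertens constants of the classes $7,13\pmod{24}$, which in turn require summing $\log p/p$ (respectively $1/p$) over all primes below a computational cutoff such as $10^{10}$. Once those numerical inputs are combined with the error bounds of Lemma~\ref{lm31}, the safety margin at $z=e^{60}$ is comfortable and the inequalities extend to all larger $z$.
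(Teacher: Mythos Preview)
Your overall strategy matches the paper's: partial summation driven by the error bounds of Lemma~\ref{lm31}, with the constants pinned down by a finite numerical sum over small primes. Two remarks.

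First, in your treatment of~\eqref{eq32a} the piece $\int_{e^{60}}^{z}|E(t)|/t^{2}\,dt$ bounded via $|E(t)|\le 2c_{3}t$ contributes $2c_{3}(\log z-60)$, which is \emph{not} uniformly bounded in $z$; with $c_{3}\approx 2.44\times10^{-7}$ it eventually overruns your margin of $0.0767$ (around $\log z\approx 1.4\times10^{5}$, admittedly enormous, but the lemma is stated for all $z>e^{60}$). The paper avoids this by further splitting at $e^{625R}$ and invoking~\eqref{eq31c}, whose extra factor $1/\log^{2}t$ makes the tail integral converge absolutely. You need to do likewise; Lemma~\ref{lm31} already hands you the tool.

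Second, for~\eqref{eq32b} the paper takes a shorter path than your proposed double partial summation. It quotes the Meissel--Mertens constants $M(24,7)=0.003897\ldots$ and $M(24,13)=-0.0681541\ldots$ computed by Languasco and Zaccagnini~\cite{LZ}, obtaining $\sum_{p\le z,\,p\equiv7,13\pmod{24}}1/p>\tfrac14\log\log z-0.06426$ directly; it then adds the numerically computed higher-order tail $\sum_{p\equiv7,13}\sum_{k\ge2}1/(kp^{k})>0.015815$ and exponentiates. Your route would also work, but note that it requires a \emph{lower} bound on $S(t)$ for all $t$ in the range of integration, not merely the upper bound $S(t)<\tfrac14\log t$ that part~(a) supplies; you would have to redo part~(a) two-sidedly. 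The paper also places its numerical cutoff at $10^{8}$ rather than $10^{10}$, bridging $[10^{8},10^{10}]$ with the Ramar\'e--Rumely bound $|E_{\psi}(t;24,l)|<1.745\sqrt{t}$.
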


\begin{proof}
In this proof of the lemma, we let $l$ be an integer congruent to $7$ or $13$ modulo $24$.
We begin by proving (\ref{eq32a}) for $z\geq e^{60}$.
Lemma \ref{lm31} gives
\begin{equation}
\begin{split}
\int_{10^{10}}^{e^{625R}} \frac{\abs{E_\psi(t; 24, l)}}{t^2} dt
< & (30-10\log 10)c_1+30c_2+(625R-60)c_3 \\
< & 0.0065,
\end{split}
\end{equation}
where $E_\psi(t; k, l)=\psi(t, k, l)-t/\vph(k)$.
By Theorem 5.2.1 of \cite{RR}, we have
\begin{equation}
\int_{10^8}^{10^{10}} \frac{\abs{E_\psi(t; 24, l)}}{t^2} dt
<1.745\int_{10^8}^{10^{10}}\frac{dt}{t^{3/2}}=\frac{3.49}{10^4}-\frac{3.49}{10^5}<0.00032.
\end{equation}
Some calculations yield that
\begin{equation}
\sum_{p<10^8, p\equiv 7\pmod{24}}\frac{\log p}{p}<\log 10-0.101846,
\end{equation}
and
\begin{equation}
\sum_{p<10^8, p\equiv 13\pmod{24}}\frac{\log p}{p}<\log 10-0.202137.
\end{equation}
Moreover, other calculations give $\psi(10^8; 24, 7)>12499496$ and $\psi(10^8; 24, 13)>12499441$.
We use partial summation and apply these inequalities and (\ref{eq31c}) to obtain that,
for $z\geq 10^8$ and $l=7$ or $13$,
\begin{equation}
\begin{split}
\sum_{\substack{p\leq z,\\ p\equiv l\pmod{24}}}\frac{\log p}{p}
\leq & \sum_{\substack{p\leq 10^8,\\ p\equiv l\pmod{24}}}\frac{\log p}{p}+\sum_{\substack{10^8<n\leq z,\\ n\equiv l\pmod{24}}}\frac{\Lm(n)}{n} \\
= & \sum_{\substack{p\leq 10^8,\\ p\equiv l\pmod{24}}}\frac{\log p}{p}+\frac{\psi(z; 24, l)}{z}-\frac{\psi(10^8; 24, l)}{z}+\int_{10^8}^z \frac{\psi(t; 24, l)}{t^2} dt\\
< & \frac{\log z}{8}-0.101846+\frac{0.01}{\log z}+0.0004+0.0065 \\
< & \frac{\log z}{8},
\end{split}
\end{equation}
which immediately gives (\ref{eq32a}) for $z\geq e^{60}$ (and even for $z\geq 10^8$).

Next we prove (\ref{eq32b}) for $z\geq e^{60}$.
By (\ref{eq31a}), for any real $z$ with $e^{60}\leq z<e^{625R}$, we have
\begin{equation}\label{eq33a}
\int_{z}^{e^{625R}} \frac{c_3(1+\log t)}{t\log^2 t}dt=c_3\left(\frac{1}{\log z}-\frac{1}{625R}+\log (625R)-\log\log z\right)
\end{equation}
and, by (\ref{eq31b}), for any real $z\geq e^{625R}$, we have
\begin{equation}
\int_{z}^{\infty} \frac{3.6\times 10^{-7}(1+\log t)}{t\log^3 t}dt=3.6\times 10^{-7}\left(\frac{1}{\log z}+\frac{1}{2\log^2 z}\right).
\end{equation}

Henceforth, we let $z\geq e^{60}$ as in the lemma.
Using the above estimates, we have
\begin{equation}
\begin{split}
& \sum_{\substack{p\leq z,\\ p\equiv l\pmod{24}}} \frac{1}{p}\\
= & \frac{1}{8}\log\log z+M(24, l)+\frac{E_\theta(z; 24, l)}{z\log z}-\int_z^\infty \frac{(1+\log t)E_\theta(t; 24, l)}{t^2\log^2 t}dt \\
> & \frac{1}{8}\log\log z+M(24, l)-1.1304\times 10^{-6}-\frac{3.73844\times 10^{-7}}{\log z},
\end{split}
\end{equation}
where $E_\theta(z; k, l)=\theta(z; k, l)-z/\vph(k)$ and $M(k, l)$ denotes the limit
\begin{equation}
\lim_{x\rightarrow\infty}\sum_{p\leq x, p\equiv l\pmod{k}}\frac{1}{p}-\frac{\log\log x}{\vph(k)}.
\end{equation}
Using $M(24,7) = 0.003897\cdots$ and $M(24,13) = -0.0681541\cdots$, as is given by \cite{LZ},
we obtain
\begin{equation}
\sum_{\substack{p\leq z,\\ p\equiv 7, 13\pmod{24}}} \frac{1}{p}>\frac{1}{8}\log\log z-0.06426
\end{equation}
and therefore
\begin{equation}
\begin{split}
\prod_{\substack{p\leq z,\\ p\equiv 7, 13\pmod{24}}}\left(1-\frac{1}{p}\right)^{-1}
= & \exp\sum_{\substack{p\leq z,\\ p\equiv 7, 13\pmod{24}}}\left(\frac{1}{p}+\frac{1}{2p^2}+\cdots \right) \\
> & \exp 0.015815+\sum_{\substack{p\leq z,\\ p\equiv 7, 13\pmod{24}}}\frac{1}{p} \\
> & 0.95442\log^\frac{1}{4} z,
\end{split}
\end{equation}
which proves the lemma.
\end{proof}

Now it is easy to estimate $V(P(z))$ and $B(z)$ using Lemma \ref{lm32} and known error estimates
for the ordinary prime number theorem.
Combining (\ref{eq32b}) and the estimate
\begin{equation}
\prod_{p\leq z}\left(1-\frac{1}{p}\right)<\frac{e^{-\gamma}}{\log z}\left(1+\frac{1}{5\log^3 z}\right)
\text{ for } z\geq 2278382
\end{equation}
given in Theorem 5.9 of \cite{Dus2}, we conclude that
\begin{equation}\label{eq23a}
\begin{split}
V(P(z))< & \prod_{2<p\leq z}\left(1-\frac{1}{p}\right) \prod_{\substack{p\leq z,\\ p\equiv 7, 13\pmod{24}}}\left(1-\frac{1}{p}\right)^2 \\
< & \frac{2e^{-\gamma}}{(0.95442)^2\log^\frac{3}{2} z}\left(1+\frac{1}{5\log^2 z}\right) \\
< & \frac{1.23274}{\log^\frac{3}{2} z}\text{ for } z>e^{40}.
\end{split}
\end{equation}
Moreover, by (3.22) in p. 70 of \cite{RS}, we have
\begin{equation}
\sum_{p\leq z} \frac{\log p}{p}<\log z-1.33258+\frac{1}{2\log z}<\log z
\end{equation}
for $z>319$ and, combining with (\ref{eq32a}), we immediately obtain
\begin{equation}\label{eq23b}
\sum_{p\leq z} \frac{\rho(p)\log(p)}{p}<\frac{3}{2}\log z
\end{equation}
or, equivalently, $B(z)<1.5$ for $z\geq e^{40}$.

(\ref{eq23a}) and (\ref{eq23b}) allow us to take $B=1.5, u=2.0174$ and $v=7.58$ in the sieve inequality (\ref{eq22})
and we obtain
\begin{equation}
\abs{S(x, \Omega^{\pm}, y)}\leq \frac{37.00754x}{\log^\frac{3}{2} x}
\end{equation}
for $x>\exp 714$ and therefore, by (\ref{eq21}),
\begin{equation}
\pi^\pm (X)\leq \frac{4.63941X}{\log^\frac{3}{2} X}
\end{equation}
for $X>\exp 716.5$.

Now we have
\begin{equation}
\begin{split}
\prod_{p\geq C, p\in Q^+\cup Q^-}\frac{p}{p-1}< & \exp \sum_{p\geq C, p\in Q^+\cup Q^-} \left(\frac{1}{p}+\frac{1}{2p^2}+\cdots \right) \\
< & \exp \left(\frac{1}{C}+\sum_{p\geq C, p\in Q^+\cup Q^-} \frac{1}{p}\right) \\
< & \exp \frac{2}{C}+\int_{C}^\infty\frac{9.27882 dt}{t\log^\frac{3}{2} t} \\
= & \exp \frac{2}{C}+\frac{18.55764}{\log^\frac{1}{2} C}<2,
\end{split}
\end{equation}
recalling that $C=\exp 716.7944$.  This proves Theorem \ref{thm2}.

\section[Proof of Theorem 1.3]{Proof of Theorem \ref{thm3}}
Assume that $N=m^2$ with $m$ squarefree is quasiperfect and
let $P$ be the greatest prime factor of $N$.  As noted in the previous section,
any prime factor of $N$ belongs to $Q^+\cup Q^-$.

Calculation gives
\begin{equation}\label{eq41}
\prod_{p<2^{29}, p\in Q^+\cup Q^-}\frac{p^2+p+1}{p^2}<1.75014319434,
\end{equation}
\begin{equation}\label{eq42}
\prod_{p<2^{29}, p\in Q^+\cup Q^-}p>\exp 62460825.5
\end{equation}
and
\begin{equation}\label{eq43}
\pi^+(2^{29})+\pi^-(2^{29})=3285696.
\end{equation}
By Theorem 6.12 of \cite{Dus2}, we have
\begin{equation}
\prod_{2^{29}<p\leq P}\frac{p}{p-1}<\frac{\log P}{29\log 2}\left(1+\frac{1}{121945\log^3 2}\right)\left(1+\frac{1}{5\log^3 P}\right).
\end{equation}
Combined with (\ref{eq41}), we obtain
\begin{equation}
\prod_{p\leq 9457308738, p\in Q^+\cup Q^-}\frac{p^2+p+1}{p^2}<2
\end{equation}
and therefore we must have $P\geq 9457308739$.

Now, let $P_0=9457308739$.  Then we must have
\begin{equation}\label{eq42}
m\geq \left(\prod_{p<2^{29}, p\in Q^+\cup Q^-}p\right) \left(\prod_{2^{29}<p\leq P_0}p\right).
\end{equation}
Theorem 4.2 of \cite{Dus2} gives
\begin{equation}
\prod_{2^{29}<p\leq P_0}p=\exp(\theta(P)-\theta(2^{29}))>\exp\left(P_0-\frac{P_0}{100\log^2 P_0}-536842885.9\right)
\end{equation}
and, with the aid of (\ref{eq42}), we see that $m>\exp 8920286609.5$ and $N>\exp 17840573219$.

Finally, we observe that, using (\ref{eq43}) and Theorem 5.1 of \cite{Dus2},
\begin{equation}
\begin{split}
& \pi^+(P)+\pi^-(P)\geq \pi(P_0)-\pi(2^{29})+\pi^+(2^{29})+\pi^-(2^{29}) \\
\geq & \frac{P}{\log P}\left(1+\frac{1}{\log P}+\frac{2}{\log^2 P}+\frac{7.32}{\log^3 P}\right)-\pi(2^{29})+3285696>406550053.02
\end{split}
\end{equation}
and therefore $N$ must have at least $406550054$ prime factors.  This completes the proof of Theorem \ref{thm3}.

{}
\vskip 12pt

{\small Tomohiro Yamada}\\
{\small Center for Japanese language and culture\\Osaka University\\562-8558\\8-1-1, Aomatanihigashi, Minoo, Osaka\\Japan}\\
{\small e-mail: \protect\normalfont\ttfamily{tyamada1093@gmail.com}}
\end{document}